\title{On totally real numbers and equidistribution}
  \author[Fili]{Paul Fili}
 \address{Department of Mathematics\\ University of Rochester, Rochester, NY 14627}
 \email{fili@math.rochester.edu}
 \author[Miner]{Zachary Miner}
\address{Department of Mathematics\\ University of Texas at Austin, TX 78712}
\email{zminer@math.utexas.edu}
 \subjclass[2010]{11G50, 11R80, 37P30}
 \keywords{Weil height, equidistribution, totally real.}
\date{\today}
\newtheorem{thm}{Theorem}
\newtheorem*{thm*}{Theorem}
\newtheorem*{alg*}{Algorithm}
\newtheorem*{lemma*}{Lemma}
\theoremstyle{remark}
\newtheorem{rmk}[thm]{Remark}
\newtheorem*{rmk*}{Remark}
\newtheorem*{notation*}{Notation}
\theoremstyle{definition}
\newtheorem*{defn*}{Definition}
\newcommand{\mybf}{\mathbb}
\newcommand{\bP}{\mybf{P}}
\newcommand{\bR}{\mybf{R}}
\newcommand{\bC}{\mybf{C}}
\newcommand{\bZ}{\mybf{Z}}
\newcommand{\bQ}{\mybf{Q}}
\newcommand{\al}{\alpha}
\providecommand{\abs}[1]{\lvert#1\rvert}
\newcommand{\p}{\partial}
\newcommand{\Qbar}{\overline{\mybf{Q}}}
\def\talltareesidedbox#1{\setbox0=\hbox{$#1$}\dimen0=\wd0 \advance\dimen0 by3pt\rlap{\hbox{\vrule height10pt width.4pt
 depth2pt \kern-.4pt\vrule height10.4pt width\dimen0 depth-10pt\kern-.4pt \vrule height10pt width.4pt depth2pt}}
 \relax \hbox to\dimen0{\hss$#1$\hss}}%s\ignorespaces}
\def\tareesidedbox#1{\setbox0=\hbox{$#1$}\dimen0=\wd0 \advance\dimen0 by3pt\rlap{\hbox{\vrule height8pt width.4pt
 depth2pt \kern-.4pt\vrule height8.4pt width\dimen0 depth-8pt\kern-.4pt \vrule height8pt width.4pt depth2pt}}
\relax \hbox to\dimen0{\hss$#1$\hss}}%s\ignorespaces}
\def\shorttareesidedbox#1{\setbox0=\hbox{$#1$}\dimen0=\wd0 \advance\dimen0 by3pt\rlap{\hbox{\vrule height7pt width.4pt
 depth2pt \kern-.4pt\vrule height7.4pt width\dimen0 depth-7pt\kern-.4pt \vrule height7pt width.4pt depth2pt}}
 \relax \hbox to\dimen0{\hss$#1$\hss}}%s\ignorespaces}
\newcommand{\Lip}{\operatorname{Lip}}
\begin{document}

\begin{abstract}
 C.J. Smyth and later Flammang studied the spectrum of the Weil height in the field of all totally real numbers, establishing both lower and upper bounds for the limit infimum of the height of all totally real integers and determining isolated values of the height. We remove the hypothesis that we consider only integers and establish an lower bound on the limit infimum of the height for all totally real numbers. Our proof relies on a quantitative equidistribution theorem for numbers of small height.
\end{abstract}

\thanks{The authors would like to thank Robert Rumely and the University of Georgia for organizing the 2011 VIGRE SSP in arithmetic dynamics and for their hospitality and financial support, during which time much of this research was conducted.}

\maketitle
%\tableofcontents

Recall that an algebraic number is said to be \emph{totally real} if all of its Galois conjugates lie in the field $\bR$ under any choice of embedding $\Qbar\hookrightarrow\bC$. The totally real numbers form a field which we denote $\bQ^{\mathrm{tr}}$. %For example, if $\zeta$ is a root of unity, then $\al = \zeta+\zeta^{-1}$ is a totally real number. 
Schinzel \cite{SchinzelTotReal} established a lower bound on the infimum of the nonzero values of the absolute logarithmic Weil height $h$ for all totally real numbers:\footnote{In fact Schinzel's result  originally implied this result for integers, however, it is easy to see that it generalizes to totally real nonintegers as well; cf. e.g., \cite{GarzaRealConjugates, HohnOnGarza}.}
\begin{thm*}[Schinzel 1973]
Let $\al\in\bQ^{\mathrm{tr}}$, $\al\neq 0,\pm 1$ be a totally real number. Then 
\[
 h(\al) \geq h\left(\frac{1+\sqrt{5}}{2}\right) = \frac{1}{2} \log \frac{1+\sqrt{5}}{2}=0.2406059\ldots
\]
where $h$ denotes the absolute logarithmic Weil height.
\end{thm*}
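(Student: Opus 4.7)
The plan is to reduce the totally real case to the totally positive case by squaring, and then to deduce the height bound from a sharp pointwise inequality combined with the Mahler-measure formula.

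If $\al \in \bQ^{\mathrm{tr}}$ satisfies $\al \neq 0, \pm 1$, then $\beta = \al^2$ is totally positive (every conjugate is a real square), $\beta \neq 0, 1$, and $h(\beta) = 2h(\al)$. It therefore suffices to prove $h(\beta) \geq \log \phi$ for every totally positive $\beta \neq 0, 1$, where $\phi = (1+\sqrt 5)/2$. Fix such a $\beta$, let $P(x) = a_0 x^d + \cdots + a_d \in \bZ[x]$ be its minimal polynomial (cleared of denominators), and let $\beta_1, \ldots, \beta_d > 0$ be its conjugates. We then have the Mahler-measure identity
\[
d\cdot h(\beta) \;=\; \log|a_0| + \sum_{i=1}^d \log^+ \beta_i,
\]
together with $\sum_i \log \beta_i = \log(|a_d|/|a_0|)$ and $\sum_i \log|1 - \beta_i| = \log(|P(1)|/|a_0|)$.

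The key step is to establish the pointwise inequality
\[
\log^+ x \;\geq\; a \log x + b \log|x - 1| + \log\phi \qquad (x > 0, \ x \neq 1),
\]
with $a = \tfrac12 - \tfrac{1}{2\sqrt 5}$ and $b = \tfrac{1}{\sqrt 5}$. These constants satisfy $a, b > 0$ and $a + b < 1$; they are uniquely determined (together with the constant $\log \phi$) by demanding that the inequality hold with equality and matching derivatives at the two points $x = \phi^2$ and $x = \phi^{-2}$, which are precisely the conjugates of the extremal $\beta = \phi^2$. A one-variable calculus check then verifies the inequality globally, the only interior critical points on $(0, 1)$ and $(1, \infty)$ being these minima. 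Summing over $i$ and inserting the identities above yields
\[
d\cdot h(\beta) \;\geq\; a \log|a_d| + b \log|P(1)| + (1 - a - b)\log|a_0| + d\log\phi.
\]
Since $\beta \neq 0, 1$ forces the nonzero integers $a_d, P(1), a_0$ to satisfy $|a_d|, |P(1)|, |a_0| \geq 1$, and all three coefficients on the right are positive, we obtain $h(\beta) \geq \log \phi$, and hence $h(\al) \geq \tfrac12 \log \phi$.

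The main obstacle is guessing the correct auxiliary inequality. Without prior knowledge that the extremum occurs at $\phi$, it is not obvious what linear combination of $\log x$ and $\log|x-1|$ should majorize $\log^+ x$; conversely, once one conjectures that $\beta = \phi^2$ should be extremal, the three equality and tangency conditions at $x = \phi^{\pm 2}$ pin down $a$, $b$, and the constant simultaneously. Once the inequality is in hand, the remainder of the argument is formal; in particular, the non-integer case requires no separate treatment because the coefficient $1 - a - b$ of $\log|a_0|$ comes out positive, which is the point at which the H\"ohn/Garza-style extension of Schinzel's integer argument is absorbed.
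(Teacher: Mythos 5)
The paper only quotes Schinzel's theorem as background and gives no proof of it (it is attributed to Schinzel's 1973 paper, with a footnote pointing to Garza and H\"ohn for the passage from totally real integers to arbitrary totally real numbers), so there is no in-paper argument to compare yours against; judged on its own, your proposal is a correct and essentially complete proof. The reduction $\beta=\al^2$, $h(\beta)=2h(\al)$, is standard, and your auxiliary inequality checks out: with $a=\tfrac12-\tfrac{1}{2\sqrt5}$ and $b=\tfrac{1}{\sqrt5}$ one has $2a+b=1$, which (using $\phi^2-1=\phi$ and $1-\phi^{-2}=\phi^{-1}$) gives equality at $x=\phi^{\pm2}$; the tangency conditions $1-a=b\phi$ and $a\phi=b$ hold; and since $a>0$, $b>0$, $1-a-b=\tfrac12-\tfrac1{2\sqrt5}>0$, the function $\log^+x-a\log x-b\log\abs{x-1}$ tends to $+\infty$ as $x\to0^+$, $x\to1$, and $x\to\infty$ and has exactly one critical point on each of $(0,1)$ and $(1,\infty)$, namely $x=a/(a+b)=\phi^{-2}$ and $x=(1-a)/(1-a-b)=\phi^2$, so $\log\phi$ is indeed the global minimum. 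Summing over the conjugates, using the Mahler-measure formula for the primitive integer minimal polynomial and $\abs{a_0},\abs{a_d},\abs{P(1)}\ge1$ (all nonzero precisely because $\beta\ne0,1$ and $P$ is irreducible), yields $h(\beta)\ge\log\phi$ and hence $h(\al)\ge\tfrac12\log\phi$, which is sharp for $\al=\phi$. For context: this is the auxiliary-function route in the style of Smyth and of H\"ohn--Skoruppa rather than Schinzel's original argument via symmetric functions, and, as you observe, the positivity of the weight $1-a-b$ on $\log\abs{a_0}$ is exactly what makes the non-integer case come for free --- which is precisely the point of the paper's footnote citing Garza and H\"ohn.
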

Work of Smyth \cite{SmythTotRealI,SmythTotRealII}, later improved by Flammang \cite{Flammang}, established bounds on the limit infimum of the height for the ring of totally real integers:
\begin{thm*}[Smyth 1980, Flammang 1996]
 Let $\bZ^{\mathrm{tr}}$ denote the ring of all totally real integers. Then
 \[
  \liminf_{\al\in \bZ^{\mathrm{tr}}} h(\al) \geq \frac{1}{2}\log 1.720566=0.271327\ldots
 \]
 and there are precisely six isolated values of the Weil height of a totally real integer in the interval $(0,0.271327\ldots)$. Further,
 \[
  \liminf_{\al\in \bZ^{\mathrm{tr}}} h(\al) \leq \log 1.31427\ldots = 0.27328\ldots
 \]
\end{thm*}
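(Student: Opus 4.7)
My plan is to establish the three parts of the theorem in turn, using classical auxiliary-function techniques and explicit constructions due to Smyth and later refined by Flammang.

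For the lower bound I would apply the \emph{resultant method}. Let $\al\in\bZ^{\mathrm{tr}}$ have degree $d$ and real Galois conjugates $\al_1,\ldots,\al_d$; then $h(\al)=\frac{1}{d}\sum_i \log^+|\al_i|$. The starting observation is that for any $Q\in\bZ[x]$ whose roots are disjoint from $\{\al_1,\ldots,\al_d\}$, the product $\prod_i Q(\al_i)$ is a nonzero rational integer, and therefore $\sum_i \log|Q(\al_i)|\geq 0$; similarly $\sum_i \log|\al_i|\geq 0$ whenever $\al\neq 0$. The strategy is to find integer polynomials $Q_1,\ldots,Q_N$ together with nonnegative real coefficients $c_0,\ldots,c_N$ and a constant $b$ for which the pointwise inequality
\[
 \log^+|x|\;\geq\;c_0\log|x|+\sum_{j=1}^N c_j\log|Q_j(x)|+b
\]
holds for all real $x$ off the zero set of the $Q_j$. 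Averaging this inequality over the $\al_i$ and invoking the resultant observation yields $h(\al)\geq b$ for every $\al\in\bZ^{\mathrm{tr}}$ whose minimal polynomial does not divide any $Q_j$. Maximizing $b$ over admissible coefficient vectors is a linear program for each fixed pool of auxiliary polynomials, and Flammang's bound $\frac{1}{2}\log 1.720566$ is the numerical optimum she obtains from a carefully chosen pool.

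The six isolated values are then the exceptional cases: any $\al\in\bZ^{\mathrm{tr}}$ with $h(\al)<\frac{1}{2}\log 1.720566$ must have some conjugate in the zero set of one of the $Q_j$, which, by irreducibility of the minimal polynomial of $\al$, forces that minimal polynomial to divide $Q_j$ and hence bounds $\deg \al$. Northcott's theorem then leaves only finitely many totally real integers of bounded degree and height to consider, and a direct enumeration of the resulting minimal polynomials together with computation of their heights exhibits exactly six values in $(0,0.271327\ldots)$. For the upper bound, I would exhibit an explicit family of totally real integers of increasing degree whose heights tend to $\log 1.31427\ldots$. Smyth's construction produces such a family from iterated substitutions in a low-degree polynomial chosen to preserve total reality, the limit value arising as the Mahler measure of a specific algebraic number which saturates the auxiliary-function inequality.

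The main obstacle is the lower bound: both the precise numerical constant and the composition of the exceptional set depend entirely on the choice of auxiliary polynomials $Q_j$, and there is no known closed-form optimum. Incremental sharpening of the bound requires enlarging the polynomial pool and re-solving a larger LP, which is precisely the route from Smyth's original value to Flammang's improvement. The upper-bound construction and the isolated-value enumeration reduce to explicit finite tasks once the auxiliary inequality is in hand.
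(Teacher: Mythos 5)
This statement is quoted in the paper as background, with citations to Smyth and Flammang; the paper itself gives no proof of it, so the only meaningful comparison is with the method of the cited works. Your outline is essentially that method: the lower bound comes from an auxiliary inequality $\log^+|x|\geq c_0\log|x|+\sum_j c_j\log|Q_j(x)|+b$ valid on $\bR$, with $Q_j\in\bZ[x]$ and $c_j\geq 0$, averaged over the (all real) conjugates of a totally real algebraic integer and combined with the fact that the relevant norms and resultants are nonzero rational integers; the exceptional $\al$ are those whose minimal polynomial divides some $Q_j$ (note that Northcott is not needed here --- divisibility into a fixed finite list of polynomials already leaves only finitely many candidates), and enumerating them produces the isolated values; the upper bound comes from Smyth's explicit sequence of totally real integers. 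In structure this is faithful to Smyth--Flammang.

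Two caveats. First, your description of the upper-bound constant is inaccurate: $\log 1.31427\ldots=0.27328\ldots$ is not the Mahler measure of a single algebraic number, and it does not ``saturate the auxiliary-function inequality'' --- there is a genuine gap between $0.271327\ldots$ and $0.27328\ldots$, which is precisely why both bounds are stated. It is the limit of the normalized heights of Smyth's sequence, which (as the paper explains) consists of preperiodic points of $H(x)=x-1/x$, and the limit equals $\int_\bR\log^+\abs{x}\,d\mu_H(x)$, a quantity Smyth computed via an iterative formula for $\mu_H$; no single term of the sequence realizes it. Second, the quantitative heart of the theorem --- the particular pool of polynomials $Q_j$, the optimal constant $b=\tfrac12\log 1.720566$, and the verification that exactly six height values lie in $(0,0.271327\ldots)$ --- is numerical and is exactly the content of Flammang's paper; your proposal correctly identifies the framework but, as you acknowledge, cannot reproduce those constants without redoing that optimization and enumeration, so as written it is a plan that defers to the cited numerics rather than a self-contained proof.
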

In this paper we establish the following bound for the limit infimum of the height which removes the hypothesis that we only consider integers and is instead a bound over all totally real numbers. Specifically, we prove the following:
\begin{thm}\label{thm:totreal}
 Let $\bQ^\mathrm{tr}$ denote the field of all totally real numbers. Then
 \[
  \liminf_{\al\in \bQ^\mathrm{tr}} h(\al) \geq \frac{140}{3}\cdot \left(\frac{1}{8}-\frac{1}{6\pi}\right)^2 = 0.241573\ldots 
 \]
\end{thm}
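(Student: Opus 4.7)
The plan is to combine a quantitative form of Bilu's equidistribution theorem (as developed by Favre--Rivera-Letelier, Baker--Rumely, or Petsche) with a carefully chosen test function that exploits the fact that the Galois conjugates of a totally real number all lie on $\bR$, while the archimedean equilibrium measure associated to the standard height is the Haar measure on the unit circle $S^1$. Since $S^1 \cap \bR = \{\pm 1\}$ has Haar measure zero, equidistribution must fail quantitatively for totally real $\al$, and this failure is forced to be proportional to $\sqrt{h(\al)}$.

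More precisely, for an algebraic number $\al$ of degree $n$ with conjugate measure $\mu_\al = \frac{1}{n}\sum_{i=1}^n \delta_{\al_i}$ on $\bC$, the quantitative equidistribution statement takes the form
\[
\left| \int f \, d\mu_\al - \int f \, d\mu_\infty \right|^2 \leq E(f) \left(h(\al) + \frac{\log n}{n}\right)
\]
where $\mu_\infty$ is the uniform measure on $S^1$ and $E(f)$ is an energy-type seminorm of $f$ (for instance the Dirichlet energy $\frac{1}{2\pi}\int |\nabla f|^2$ modulo normalization). I would then fix a Lipschitz function $f\colon\bC\to\bR$ satisfying (i) $f(x)\geq 0$ for every $x\in\bR$, so that $\int f\,d\mu_\al \geq 0$ whenever $\al$ is totally real, and (ii) $\int_{S^1}f\,d\mu_\infty = -\delta$ for a positive constant $\delta$. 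Combined with the non-archimedean contributions (which, for a height lower bound, only help and can be discarded via the analogous statement at each non-archimedean place, or simply absorbed into the Berkovich-theoretic formulation of the estimate above), this yields $\delta^2 \leq E(f)\cdot \liminf h(\al)$, hence the desired lower bound with constant $\delta^2/E(f)$.

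The main obstacle is Step~3: producing the extremal $f$ and carrying out the explicit optimization that yields the exact constant $\tfrac{140}{3}(\tfrac{1}{8}-\tfrac{1}{6\pi})^2$. The shape of the constant is consistent with an optimal $f$ which is a low-degree polynomial expression in $\re z$ (or $|z|^2$) cut off appropriately — the moment $\tfrac{1}{8}$ representing a contribution of $f$ evaluated on $\bR$ (where the conjugates lie, forcing a positive contribution like $\int_{-1}^1 x^k(1-x^2)\,dx$), the moment $\tfrac{1}{6\pi}$ representing the averaged value of $f$ on $S^1$ (so that $\delta=\tfrac{1}{8}-\tfrac{1}{6\pi}$), and the factor $\tfrac{140}{3}$ coming from $1/E(f)$ after normalization. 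A secondary technical issue is verifying that the non-archimedean places of $\al$ can be handled (either trivially, since $h$ is a sum of non-negative local terms, or through a parallel choice of test function on Berkovich space) without losing the sharp numerical constant; this ought to be routine in the adelic/Berkovich framework but must be tracked carefully.
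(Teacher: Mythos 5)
Your strategy is structurally the same as the paper's: apply the Favre--Rivera-Letelier quantitative equidistribution bound at the archimedean place to a single $C^1$ test function whose average over the real conjugates is forced to exceed its average against the uniform measure $\lambda$ on the unit circle, and let Northcott's theorem kill the degree-dependent error terms in the liminf, leaving a bound of the shape $\delta^2/E(f)$. Two of your worries are non-issues: the FRL estimate at the archimedean place already involves the global height $h(\al)$, so nothing non-archimedean needs to be discarded or tracked; and your displayed inequality omits the $\Lip(f)/d$ term that appears in the actual theorem, but this is harmless since it vanishes as $d\to\infty$.

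The genuine gap is the step you yourself flag: the theorem \emph{is} the explicit constant, and you produce neither the test function nor the three exact evaluations that yield it. The paper takes $f(z)=\mathsf{d}(z,i)^3\,\mathsf{d}(z,-i)^3$, the product of cubes of chordal distances to $\pm i$. Every point of $\bP^1(\bR)$ lies at chordal distance $1/\sqrt{2}$ from both $\pm i$, so $f\equiv 1/8$ on the real line; hence the conjugate average of \emph{any} totally real $\al$ is exactly $1/8$ (no positivity argument needed), while $\int f\,d\lambda = 1/(6\pi)$ and $\langle f,f\rangle = 3/140$, which gives precisely $\tfrac{140}{3}(\tfrac18-\tfrac{1}{6\pi})^2$. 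Your heuristic for where the constants come from points in the wrong direction: the conjugates of a totally real number are not confined to $[-1,1]$ (they can be arbitrarily large), so a function whose favorable real contribution is an integral such as $\int_{-1}^1 x^k(1-x^2)\,dx$, or whose behavior is only controlled on a bounded real interval, cannot close the argument; one needs control of $f$ on all of $\bP^1(\bR)$ including at $\infty$, and the natural way to get it --- the paper's way --- is to make $f$ constant on the whole real line (equivalently, after subtracting that constant, a nonnegative-on-$\bR$ function vanishing identically on $\bP^1(\bR)$ with strictly smaller circle average). Without exhibiting such an $f$ and carrying out the computations of $1/8$, $1/(6\pi)$, and the Dirichlet energy $3/140$, your argument shows only that \emph{some} positive lower bound exists, not the stated inequality.
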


Previously it seems to have been unknown if in fact the limit infimum for the Weil height of all totally numbers in fact coincided with infimum of the nonzero values set by Schinzel. Our result may be thought of as similar in spirit to the result of Zagier on the spectrum of the height on the curve $y=x+1$ over $\Qbar$, that is, of $h(\al)+h(1-\al)$ (see \cite[\S 3.A]{Zagier}), and the bounds of Bombieri and Zannier for the limit infimum of the height of totally $p$-adic numbers \cite{BombieriZannierNote}.

Our proof relies on results about the equidistribution of points of small Weil height. Early work on the distribution of numbers by height included results like the uniform distribution of the Farey fractions, culminating in quantitative generalization of the distribution of rational points at all places by Choi \cite{ChoiThesis,ChoiDistribution}. Shortly after Choi's work, Bilu \cite{Bilu} formulated an equidistribution for algebraic points of small height at the archimedean place, proving that the Galois conjugates of a sequence of numbers with Weil height tending to zero must equidistribute along the unit circle in $\bC$ in the sense of weak convergence of measures. Later this theorem was made quantitative by Petsche \cite{PetscheQuantBilu}, and then vastly generalized by Favre and Rivera-Letelier \cite{FRL} (see also the corrigendum \cite{FRLcorrigendum}). We will use the result of Favre and Rivera-Letelier in our proof.

We note that of course the upper bound for the limit infimum set by Smyth remains valid for all totally real numbers. It is interesting to note that Smyth established this upper bound by constructing an explicit sequence of totally real algebraic integers with height limiting to the value $0.27328\ldots$ While Smyth's work precedes the development of arithmetic dynamics, his construction can naturally be thought of as dynamical in spirit, as the sequence he constructs is a certain sequence of preperiodic points for the rational map
\[
 H(x)=x-1/x
\]
which has a totally real archimedean Julia set. As these points have $H$-canonical height $0$, it follows by work of Petsche, Szpiro, and Tucker \cite[Theorem 1 and Prop. 16]{PST-pairing} that their standard Weil height must tend to a limit which is given by the the value of the Arakelov-Zhang pairing of $H$ with the usual squaring map:
\[
 \langle H(z),z^2\rangle_{\mathrm{AZ}} = \int_\bR \log^+\abs{x} \,d\mu_H(x) = 0.27328\ldots
\]
where $\mu_H$ is the canonical measure associated to iteration of $H(x)$. Smyth computed this value by determining a remarkable iterative formula for the distribution of $\mu_H$ (see \cite[Theorem 3]{SmythTotRealI}).

Before beginning our proof, we first recall some background in order to set up the quantitative equidistribution theorem. Suppose that $f\in C^1(\bP^1(\bC))$. We denote the Lipschitz constant of $f$ with respect to the chordal metric by
\[
 \Lip(f) = \sup_{x,y\in \bP^1(\bC)} \frac{\abs{f(x)-f(y)}}{ \mathsf{d}(x,y) },
\]
where
\[
 \mathsf{d}(x,y) = \frac{ \abs{x-y} }{\sqrt{1+\abs{x}^2}\cdot \sqrt{1+\abs{y}^2}}
\]
denotes the archimedean chordal metric on $\bP^1(\bC)$. Write $df = \p f + \bar\p f$ where $\p f$ is a form of type $(1,0)$ on $\bC=\bR\oplus i\bR$ and $\bar\p f$ is of type $(0,1)$, and define $d^c f = \frac{1}{2\pi i}(\p f - \bar\p f)$. (This normalization is to ensure that the Laplacian $\Delta g = dd^c g$ satisfies $\Delta g_\rho = \rho$ where $g_\rho(x) = \int_\bC \log\abs{x-y}\,d\rho(y)$, for suitable Borel probability measures $\rho$ on $\bC$.) For $f,g\in C^1(\bP^1(\bC))$ real-valued functions, we define the Dirichlet form to be
\[
 \langle f,g\rangle = \int_{\bC} df\wedge d^cg = \int_{\bC}\left( \frac{\p f}{\p x}\frac{\p g}{\p x} + \frac{\p f}{\p y}\frac{\p g}{\p y}\right)\frac{dx\wedge dy}{2\pi}.
\]
This defines a bilinear form which satisfies the Cauchy-Schwartz inequality $
 \langle f,g\rangle \leq \langle f,f\rangle^{1/2} \langle g,g\rangle^{1/2}.
$

Using the fact that our difference with the standard archimedean measure is 0, the H\"older constant $\kappa$ in \cite[Theorem 7]{FRL} can be taken to be $1$, and we arrive at the following statement:
\begin{thm*}[Quantitative equidistribution at $\bC$]
 Let $\lambda$ denote the normalized Haar measure of the unit circle in the complex plane and $G$ denote the absolute Galois group. There exists a constant $c>0$ such that for all $f\in C^1(\bP^1(\bC))$ and $\al\in\Qbar$,
\[
 \bigg| \frac{1}{d} \sum_{z\in G\al} f(z) - \int_{\bP^1(\bC)} f(z)\,d\lambda \bigg|
\leq \frac{\Lip(f)}{d} + \left( h(\al) + c \frac{\log d}{d} \right)^{1/2}\langle f,f\rangle^{1/2}
\]
where $d = \# G\al$ is the degree of $\al$ and $\Lip(f)$ denotes the Lipschitz constant of $f$ on $\bP^1(\bC)$ as defined above.
\end{thm*}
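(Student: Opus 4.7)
The plan is to deduce this statement as a direct specialization of Theorem 7 of Favre and Rivera-Letelier \cite{FRL, FRLcorrigendum}. That theorem is a quantitative equidistribution result on the Berkovich projective line at each place $v$: it compares the averaged empirical measure $\tfrac{1}{d}\sum_{z\in G\al}\delta_z$ against a chosen adelic reference measure $(\mu_v)_v$, with the error at each place bounded by a Lipschitz/H\"older norm of the test function plus a Dirichlet-energy term weighted by $(h(\al)+O(\log d/d))^{1/2}$, together with a contribution coming from the ``potential difference'' between $\mu_v$ and the canonical measure for the Weil height at $v$.

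The first step is to choose the adelic system to be the standard one associated to $h$: a Dirac at the Gauss point at every finite place, and the Haar measure $\lambda$ on the unit circle at $\infty$. With this choice the reference measure coincides with the canonical measure for the Weil height at every place, so every potential difference vanishes identically (this is the remark in the excerpt that ``our difference with the standard archimedean measure is $0$''). Two consequences follow: the non-archimedean places contribute nothing beyond the $\log d/d$ correction, so only the archimedean term survives the sum over places; and the H\"older regularity exponent $\kappa$ appearing in [FRL, Theorem 7] may be taken equal to $1$, so the regularity hypothesis on $f$ reduces to Lipschitz continuity with respect to the chordal metric, which is automatic for $f\in C^1(\bP^1(\bC))$.

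The second step is to read off the resulting archimedean inequality. FRL's local error at $v=\infty$ becomes exactly $\Lip(f)/d$, with $\Lip$ being the chordal Lipschitz constant (as $\kappa=1$); the global energy term becomes $(h(\al)+c\log d/d)^{1/2}\langle f,f\rangle^{1/2}$, with a universal constant $c>0$ depending only on the standard adelic setup and not on $\al$ or $f$. Assembling these gives precisely the stated bound.

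The one point requiring attention — and the main (mild) obstacle — is reconciling normalizations between FRL's framework and those of the excerpt: FRL's archimedean Dirichlet energy must be identified with $\langle f,f\rangle = \int_\bC df\wedge d^c f$ under the convention $d^c=\frac{1}{2\pi i}(\p-\bar\p)$ (chosen precisely so that $\Delta g_\rho = \rho$ for the potential of a Borel probability measure), and the archimedean local error in FRL must be identified with $\Lip(f)/d$ using the chordal metric defined above. These are conventional checks; the substantive content is carried entirely by [FRL, Theorem 7], and the task of this specialization is simply to verify that the standard adelic setup trivializes every non-archimedean hypothesis of FRL and collapses the sum over places to the single archimedean term in the statement.
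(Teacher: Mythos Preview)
Your proposal is correct and follows essentially the same approach as the paper: the paper likewise obtains this statement as a direct specialization of \cite[Theorem 7]{FRL}, observing that with the standard adelic measure the potential difference with $\lambda$ vanishes and hence the H\"older exponent $\kappa$ may be taken equal to $1$. Your write-up supplies more of the bookkeeping (the collapse of the non-archimedean contributions, the identification of the Dirichlet form under the stated normalization of $d^c$), but the substance is identical.
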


\begin{proof}[Proof of Theorem \ref{thm:totreal}]
We will obtain our lower bound in Theorem \ref{thm:totreal} by integrating a test function $f(z)$ and computing the exact values of the integral of $f$ against $\lambda$ and the average of $f$ over the Galois conjugates of any totally real number. Specifically, let
\[
 f(z) = \mathsf{d}(z,i)^3\cdot \mathsf{d}(z,-i)^3 = \left( \frac{\sqrt{x^2+(y-1)^2}\sqrt{x^2 + (y+1)^2}}{2(1+x^2+y^2)}\right)^3
\]
where $z=x+iy$ and as defined above $\mathsf{d}$ denotes the chordal metric on $\bP^1(\bC)$. Notice that $f$ is $C^1$ on $\bC=\bR\oplus i\bR\cong \bR^2$ since both partial derivatives are continuous. Further, for $x\in \bR$, we have $d(x,i)=d(x,-i)=1/\sqrt{2}$, and so $f(x) = 1/8$. Thus, for \emph{any} totally real $\al$, the average
\[
 \frac{1}{d} \sum_{z\in G\al} f(z) = \frac{1}{8}.
\]
Further, it is a simple matter to compute that
\[
 \int_{\bP^1(\bC)} f(z)\,d\lambda(z) = \frac{1}{6\pi}.
\]
Thus by quantitative equidistribution we have
\[
 \left|\frac{1}{8}-\frac{1}{6\pi}\right| \leq \frac{\Lip(f)}{d} + \left( h(\al) + c \frac{\log d}{d} \right)^{1/2}\langle f,f\rangle^{1/2}.
\]
Notice that as we take the limit infimum as $\al$ ranges over all elements of $\bQ^{\mathrm{tr}}$, we only care about numbers of a bounded height, so by Northcott's theorem the degree $d$ must tend to infinity. Thus when we take the limit infimum above, we obtain the lower bound:
\[
 \left|\frac{1}{8}-\frac{1}{6\pi}\right| \leq \liminf_{\al\in \bQ^{\mathrm{tr}}} h(\al)^{1/2} \cdot \langle f,f\rangle^{1/2}
\]
With the aid of a computer it is easy to calculate the exact value $\langle f,f\rangle = 3/140$, and thus squaring both sides and rearranging terms, we arrive at
\[
 \frac{140}{3}\cdot \left|\frac{1}{8}-\frac{1}{6\pi}\right|^2 = 0.241573\ldots \leq \liminf_{\al\in \bQ^{\mathrm{tr}}} h(\al)
\]
which concludes the proof of Theorem \ref{thm:totreal}.
\end{proof}

\begin{rmk}
 This result can be improved somewhat, for example, by optimizing the choice of exponent (for example, choosing $3.3$ instead of $3$ will result in a bound of $0.241713\ldots$, at the cost of complicating the computations), however, it seems unlikely that this can be much improved without a somewhat different approach that involves dropping the assumption that $f$ is constant on the real line. More specifically, the main term of the quantitative equidistribution theorem quoted above essentially arises from viewing the integral of $f$ against a smoothed approximation to the measure $\lambda - [\al]$, where $[\al]$ denotes the probability measure equally distributed among the Galois conjugates of $\al$, as a Dirichlet pairing between $f$ and the potential function $g$ of an approximation to $\lambda - [\al]$; consequently, we expect the inequality to be strict, and our lower bound not to be sharp, whenever there do not exist constants $c,d$ such that $f\neq cg+d$. Thus an `ideal' choice of $f$ might be such a potential function; however, such a function is almost certain not to be constant on $\bP^1(\bR)$, which was a key assumption in our approach.
\end{rmk}

\bibliographystyle{abbrv} % abbrv} %amsplain %amsalpha 
\bibliography{bib}        % is inserted.

\begin{thebibliography}{10}

\bibitem{Bilu}
Y.~Bilu.
\newblock Limit distribution of small points on algebraic tori.
\newblock {\em Duke Math. J.}, 89(3):465--476, 1997.

\bibitem{BombieriZannierNote}
E.~Bombieri and U.~Zannier.
\newblock A note on heights in certain infinite extensions of {$\mathbb{Q}$}.
\newblock {\em Atti Accad. Naz. Lincei Cl. Sci. Fis. Mat. Natur. Rend. Lincei
  (9) Mat. Appl.}, 12:5--14, 2001.

\bibitem{ChoiThesis}
K.-K. Choi.
\newblock Diophantine approximation on projective spaces over number fields.
\newblock {\em Ph.D. dissertation, University of Texas at Austin}, 1996.

\bibitem{ChoiDistribution}
K.-K. Choi.
\newblock On the distribution of points in projective space of bounded height.
\newblock {\em Trans. Amer. Math. Soc.}, 352(3):1071--1111, 2000.

\bibitem{FRL}
C.~Favre and J.~Rivera-Letelier.
\newblock \'{E}quidistribution quantitative des points de petite hauteur sur la
  droite projective.
\newblock {\em Math. Ann.}, 335(2):311--361, 2006.

\bibitem{FRLcorrigendum}
C.~Favre and J.~Rivera-Letelier.
\newblock Corrigendum to: ``{Q}uantitative uniform distribution of points of
  small height on the projective line'' ({F}rench) [{M}ath. {A}nn. {\bf 335}
  (2006), no. 2, 311--361; mr2221116].
\newblock {\em Math. Ann.}, 339(4):799--801, 2007.

\bibitem{Flammang}
V.~Flammang.
\newblock Two new points in the spectrum of the absolute {M}ahler measure of
  totally positive algebraic integers.
\newblock {\em Math. Comp.}, 65(213):307--311, 1996.

\bibitem{GarzaRealConjugates}
J.~Garza.
\newblock On the height of algebraic numbers with real conjugates.
\newblock {\em Acta Arith.}, 128(4):385--389, 2007.

\bibitem{HohnOnGarza}
G.~H{\"o}hn.
\newblock On a theorem of {G}arza regarding algebraic numbers with real
  conjugates.
\newblock {\em Int. J. Number Theory}, 7(4):943--945, 2011.

\bibitem{PetscheQuantBilu}
C.~Petsche.
\newblock A quantitative version of {B}ilu's equidistribution theorem.
\newblock {\em Int. J. Number Theory}, 1(2):281--291, 2005.

\bibitem{PST-pairing}
C.~{Petsche}, L.~{Szpiro}, and T.~J. {Tucker}.
\newblock {A dynamical pairing between two rational maps}.
\newblock {\em Trans. Amer. Math. Soc.}, 364(4):1687--1710, 2012.

\bibitem{SchinzelTotReal}
A.~Schinzel.
\newblock On the product of the conjugates outside the unit circle of an
  algebraic number.
\newblock {\em Acta Arith.}, 24:385--399, 1973.
\newblock Collection of articles dedicated to Carl Ludwig Siegel on the
  occasion of his seventy-fifth birthday. IV.

\bibitem{SmythTotRealI}
C.~J. Smyth.
\newblock On the measure of totally real algebraic integers.
\newblock {\em J. Austral. Math. Soc. Ser. A}, 30(2):137--149, 1980/81.

\bibitem{SmythTotRealII}
C.~J. Smyth.
\newblock On the measure of totally real algebraic integers. {II}.
\newblock {\em Math. Comp.}, 37(155):205--208, 1981.

\bibitem{Zagier}
D.~Zagier.
\newblock Algebraic numbers close to both {$0$} and {$1$}.
\newblock {\em Math. Comp.}, 61(203):485--491, 1993.

\end{thebibliography}

\end{document}